\def\ifdraft{\ifdim\overfullrule>\z@
\expandafter\@firstoftwo\else\expandafter\@secondoftwo\fi}
\newcommand{\crefnolink}[1]{\cref*{#1}}
\newcommand*{\saved@uline}{}
\let\saved@uline\uline
\newcommand*{\mathuline}{%
  \mathpalette{\math@uline\saved@uline}%
}
\newcommand*{\math@uline}[3]{%
  \mbox{#1{$#2#3\m@th$}}%
}
\renewcommand*{\uline}{%
  \relax
  \ifmmode
  \expandafter\mathuline%
  \else
  \expandafter\saved@uline%
  \fi
}
\definecolor{cite}{HTML}{bb3e03}
\definecolor{url}{HTML}{698996}
\definecolor{link}{HTML}{005f73}
\tikzstyle{arrow} = [-{Straight Barb[scale=0.8]}, line width=0.2mm]
\setlist[itemize,1]{label=--}
\setlist[itemize,2]{label=+}
\setlist[itemize,3]{label=$\bullet$}
\setlist[itemize,4]{label=$\circ$}
\setlist[enumerate,1]{label=(\roman*)}
\setlist{nosep}
\renewcommand{\mathsf}[1]{\text{\normalfont\sffamily#1}}
\DeclareMathAlphabet{\eur}{U}{zeus}{m}{n}
\renewcommand{\mathcal}[1]{\eur{#1}}
\Crefname{prop}{Proposition}{Propositions}
\Crefname{lem}{Lemma}{Lemmas}
\Crefname{cor}{Corollary}{Corollaries}
\Crefname{thm}{Theorem}{Theorems}
\Crefname{defn}{Definition}{Definitions}
\Crefname{notation}{Notation}{Notations}
\Crefname{conj}{Conjecture}{Conjectures}
\Crefname{ass}{Assumption}{Assumptions}
\Crefname{expt}{Expectation}{Expectations}
\Crefname{rmk}{Remark}{Remarks}
\Crefname{question}{Question}{Questions}
\Crefname{expl}{Example}{Examples}
\Crefname{figure}{Figure}{Figures}
\theoremstyle{plain}
\newtheorem{prop}[subsubsection]{Proposition}
\newtheorem{lem}[subsubsection]{Lemma}
\newtheorem{cor}[subsubsection]{Corollary}
\newtheorem{thm}[subsubsection]{Theorem}
\newtheorem*{thm*}{Theorem}
\theoremstyle{definition}
\theoremstyle{remark}
\newtheorem{rmk}[subsubsection]{Remark}
\newtheorem*{rmk*}{Remark}
\numberwithin{equation}{subsection}
\newcommand{\teq}{\addtocounter{subsubsection}{1}\tag{\thesubsubsection}}
\DeclareMathOperator{\BiMod}{\mathsf{BiMod}}
\newcommand{\Br}{\mathrm{Br}}
\newcommand{\CC}{\mathbb{C}}
\DeclareMathOperator{\Ch}{\mathsf{Ch}}
\DeclareMathOperator{\Cone}{\mathsf{Cone}}
\newcommand{\defeq}{\coloneqq}
\newcommand{\Fqbar}{\overline{\mathbb{F}}_q}
\newcommand{\FT}{\mathsf{FT}}
\DeclareMathOperator{\GL}{GL}
\newcommand{\Gm}{\mathbb{G}_\mathsf{m}}
\newcommand{\gr}{\mathsf{gr}}
\DeclareMathOperator{\Hecke}{\mathsf{H}}
\newcommand{\ho}{\mathop{}\!\mathsf{h}}
\DeclareMathOperator{\Hom}{\mathsf{Hom}}
\newcommand{\homflypt}{\textsc{homfly-pt}}
\newcommand{\hphMod}{\smash{\mhyph\Mod}}
\DeclareMathOperator{\id}{\mathsf{id}}
\DeclareMathOperator{\Ind}{\mathsf{Ind}}
\DeclareMathOperator{\len}{\ell}
\DeclareMathOperator{\Mod}{\mathsf{Mod}}
\DeclareMathOperator{\munit}{\mathsf{1}}
\newcommand{\Negut}{Neguț}
\DeclareMathOperator{\oblv}{\mathsf{oblv}}
\newcommand{\Qlbar}{\QQbar_{\ell}}
\def\QlbarA_#1{\QQbar_{\ell,#1}}
\newcommand{\QQbar}{\overline{\mathbb{Q}}}
\DeclareMathOperator{\rank}{\mathsf{rank}}
\newcommand{\ren}{\mathsf{ren}}
\newcommand{\SBim}{\mathsf{SBim}}
\newcommand{\sfB}{\mathsf{B}}
\newcommand{\sfL}{\mathsf{L}}
\newcommand{\sfR}{\mathsf{R}}
\newcommand{\sfRR}{\mathsf{RR}}
\DeclareMathOperator{\Shv}{\mathsf{Shv}}
\DeclareMathOperator{\Sym}{\mathsf{Sym}}
\DeclareMathOperator{\Vect}{\mathsf{Vect}}
\mathchardef\mhyphensymb="2D
\newcommand{\mhyph}{\mhyphensymb\!}
\newcommand{\arrdisplacementsp}{0.72ex}
\newcommand{\lrangle}[1]{\langle#1\rangle}
\title{Relative Serre duality for Hecke categories}
\author{Quoc P. Ho}
\address{Department of Mathematics, The Hong Kong University of Science and Technology (HKUST), Clear Water Bay, Hong Kong}
\email{phuquocvn@gmail.com}
\author{Penghui Li}
\address{YMSC, Tsinghua University, Beijing, China}
\email{lipenghui@mail.tsinghua.edu.cn}
\date{\today}
\keywords{Hecke categories, Soergel bimodules, Serre duality, Parabolic induction and restriction.}
\subjclass[2020]{Primary 20C08, 18N25. Secondary 57K18.}
\begin{document}
\begin{abstract}
  We prove a conjecture of Gorsky, Hogancamp, Mellit, and Nakagane in the Weyl group case. Namely, we show that the left and right adjoints of the parabolic induction functor between the associated Hecke categories of Soergel bimodules differ by the relative full twist. This exhibits a relative Serre duality pattern for the Hecke categories.
\end{abstract}

\maketitle
\tableofcontents

\section{Introduction}

\subsection{Soergel bimodules}
Let $G$ be a connected reductive group over $\Fqbar$, equipped with a fixed Borel subgroup $B$ and a fixed maximal torus $T \subseteq B$. Let $R \defeq \Sym(X^*(T) \otimes_{\mathbb{Z}} \CC\lrangle{-2})$ be the graded polynomial algebra generated by $\rank T$ elements where the generators live in graded degree $2$. Here, the angular bracket $\lrangle{-}$ denotes a formal grading shift, which is distinct from the square bracket $[-]$ (which will appear later on in the paper) used to denote a cohomological shift.

Consider $\BiMod_R(\Vect^{\gr,\heartsuit})$, the monoidal abelian category of graded bimodules over $R$, where the monoidal product $\otimes_R$ is denoted by $\star$. By construction, $R$ is equipped with an action of the Weyl group $W$ of $G$. The category of Soergel bimodules\footnote{\label{ftn:coxeter_SBim}The notation $\SBim_W$ is slightly abusive as $\SBim_W$ depends on the Coxeter system and not just the Weyl group $W$. Note also that, as the notation might suggest, the category $\SBim_W$ can be more generally defined for any Coxeter system. In this paper, we will only consider the (finite) Weyl group case.} $\SBim_W$ is the full idempotent complete monoidal additive subcategory of $\BiMod_R(\Vect^{\gr,\heartsuit})$ stable under grading shifts and generated by objects of the form $R \otimes_{R^s} R$ where $s \in W$ is a simple reflection. In other words, $\SBim_W$ is generated, under taking finite direct sums, summands, and grading shifts, by $R\otimes_{R^{s_1}} R \otimes_{R^{s_2}} \cdots \otimes_{R^{s_k}} R$ for any sequence of simple reflections $s_i\in W$.

Let $\Ch^b(\SBim_W)$ denote the monoidal $\infty$-category\footnote{See \cref{rmk:infty_vs_triangulated}.} of bounded chain complexes of Soergel bimodules, whose monoidal product is also denoted by $\star$. In~\cite{rouquier_categorification_2006}, Rouquier constructs an object $R_\beta \in \Ch^b(\SBim_W)$, known as the Rouquier complex, associated to each $\beta \in \Br_W$, the corresponding braid group, that is compatible with the braid relations in the sense that we have an equivalence of objects in $\Ch^b(\SBim_W)$
\[
  R_{\beta_1} \star R_{\beta_2} \simeq R_{\beta_1\beta_2}, \qquad\text{for } \beta_1, \beta_2 \in \Br_W. \teq\label{eq:Rouquier_braid_relation}
\]
In particular, we have a complex $\FT_G \in \Ch^b(\SBim_W)$ associated to the full twist braid, i.e., the square of the longest element.

\subsection{Parabolic induction and restriction functors}
Let $P$ be a proper standard parabolic subgroup of $G$ with Levi factor $L$. We have a fully faithful embedding
\[
  \iota: \Ch^b(\SBim_{W_L}) \hookrightarrow \Ch^b(\SBim_W),
\]
induced by the corresponding embedding of the additive categories of Soergel bimodules. One can show that $\iota$ admits both a left and a right adjoint, denoted by $\iota^{\sfL}$ and $\iota^{\sfR}$, respectively.\footnote{See~\cite{gorsky_serre_2019} for more details or \cref{subsect:geometric_parabolic_induction_restriction} below for a geometric perspective.} Since $\iota$ is a monoidal fully faithful embedding, unless confusion is likely to occur, we will identify objects in $\Ch^b(\SBim_{W_L})$ with their images in $\Ch^b(\SBim_W)$ via $\iota$ without explicitly invoking $\iota$.

\subsection{Serre duality for Hecke categories}
Despite their representation theoretic origin, the categories of Soergel bimodules $\Ch^b(\SBim_W)$ in type $A$ play an important role in low dimensional topology. For example, they are originally used in \cites{khovanov_triply-graded_2007} to define the \homflypt{} homology of links and have since attracted a lot of attention in the study of link invariants.

Also in type $A$, and $P=B$, motivated by a certain symmetry in the \homflypt{} homology theory of links, Gorsky, Hogancamp, Mellit, and Nakagane showed that $\iota^{\sfL}$ and $\iota^{\sfR}$ are related to each other by the full twist $\FT_G$. More precisely, they proved the following theorem, which refines some results of~\cite{beilinson_tilting_2004,mazorchuk_projective-injective_2008} in these cases.

\begin{thm}[\cite{gorsky_serre_2019}] \label{thm:original_serre_Hecke}
  For $G = \GL_n$, $P$ the parabolic subgroup given by the partition $(r, 1, 1, \dots, 1)$ (and hence, $L \simeq \GL_r \times \Gm^{n-r}$), we have a natural equivalence of functors $\iota^{\sfR} \simeq \iota^{\sfL}(\FT_{G,L} \star -)$, where $\FT_{G,L} \defeq \FT_L^{-1}\star \FT_G$.
\end{thm}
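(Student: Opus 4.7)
The plan is to pass to a geometric model of the Hecke category. Under a monoidal fully faithful embedding of $\Ch^b(\SBim_W)$ into a suitable category of constructible sheaves on the double quotient $B \backslash G / B$ (and analogously for $L$), the parabolic induction $\iota$ is realized as pull-push along a correspondence of stacks relating the Hecke geometry of $L$ to that of $G$ via the parabolic $P$. In this realization, the left and right adjoints $\iota^L$ and $\iota^R$ arise from the same correspondence but with the $*$ and $!$ pullback/pushforward operations interchanged appropriately.

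The key technical input is then a relative Verdier/Serre duality theorem for the legs of this correspondence, producing a natural isomorphism
\[
  \iota^R \simeq \iota^L(- \star \omega)
\]
for a single invertible object $\omega \in \Ch^b(\SBim_{W_L})$ built out of fiberwise dualizing data. The theorem is thus reduced to the purely internal identification $\omega \simeq \FT_{G,L} = \FT_L^{-1} \star \FT_G$.

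To verify this last identification, I would combine the known description of $\FT_G$ as a monodromic or Koszul-dual realization of Rouquier's complex for the square of the longest element --- together with the analogous description of $\FT_L$ --- with a computation of the fiberwise dualizing complex of the smooth leg of the correspondence. The fiber is modeled on the partial flag variety $G/P$ (or rather its monodromic lift), whose cohomological contribution matches the monodromy defining $\FT_G$, while the Levi contribution accounts for the correction factor $\FT_L^{-1}$. For $G = \GL_n$ and $P$ of type $(r, 1, \dots, 1)$, where $L \simeq \GL_r \times \Gm^{n-r}$, the Levi braid is concentrated in the $\GL_r$ factor and matches $\FT_L$ on the nose.

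\textbf{Main obstacle.} The decisive difficulty is pinning down $\omega$ as the \emph{Rouquier complex} $\FT_{G,L}$ rather than merely as an invertible object of the correct graded character. This likely calls for a mixed or monodromic enhancement of the ambient sheaf category, in which the full twist is characterized intrinsically by a monodromy condition on the regular semisimple locus that can then be matched with the geometric origin of $\omega$. Carrying out this identification in a way that is natural in the adjunction data --- so that $\iota^R \simeq \iota^L(\FT_{G,L} \star -)$ is an equivalence of functors, not merely a pointwise isomorphism --- is the technical heart of the argument.
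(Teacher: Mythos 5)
Your proposal is a plan rather than a proof, and its central step does not hold up as stated. In the geometric model (which the paper also uses, so that part of your setup is fine: $\iota = q_!p^*$, $\iota^L = p_*q^*$, $\iota^R = p_*q^!$ for the correspondence $B_L\backslash L/B_L \xleftarrow{p} B\backslash P/B \xrightarrow{q} B\backslash G/B$), the asymmetry between $\iota^L$ and $\iota^R$ comes entirely from the leg $q$, which is a \emph{closed immersion}, not a smooth map. For a closed immersion there is no relative Verdier/Serre duality identifying $q^!$ with $q^*$ twisted by an invertible ``fiberwise dualizing'' object; that formula is available only for smooth morphisms, and the smooth leg $p$ contributes nothing here since $p^*$ and $p_*$ are mutually inverse. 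So the ``key technical input'' you invoke --- a natural isomorphism $\iota^R \simeq \iota^L(-\star\omega)$ with $\omega$ invertible, obtained from duality for the correspondence --- is not an available tool; the existence of such an invertible twist is essentially the whole content of the theorem. Compounding this, you place $\omega$ in $\Ch^b(\SBim_{W_L})$, whereas the twist $\FT_{G,L}=\FT_L^{-1}\star\FT_G$ is an object of $\Ch^b(\SBim_W)$ acting on the source of $\iota^L$; and the identification of $\omega$ with the Rouquier complex $\FT_{G,L}$ --- which you yourself flag as the ``main obstacle'' --- is exactly the non-formal part and is left unresolved.

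For contrast, the paper never computes a dualizing object. It uses the recollement triangle $\iota\iota^R K \to K \to j_*j^*K$ attached to the closed immersion $q$ and its open complement $j$, and reduces the theorem to two statements: (i) $\FT_{G,L}\star-$ carries $\ker\iota^R$ to $\ker\iota^L$, proved by identifying $\ker\iota^R=\lrangle{\nabla_u \mid u\notin W_L}$ and $\ker\iota^L=\lrangle{\Delta_u \mid u\notin W_L}$ and using convolution identities $\Delta_{w_0}\nabla_u\simeq\Delta_{w_0u}$ together with Bruhat-order closure; and (ii) a morphism $\alpha:\FT_G\to\FT_L$, built from the canonical map $\Delta_u\to\nabla_u$ with $u=w_0w_{0,L}$, such that $\iota^L(\alpha)$ is an equivalence. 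Rigidity of $\Hecke_L$ (hence $\Hecke_L$-linearity of $\iota^L$) then converts the triangle into the natural equivalence $\iota^L(\FT_{G,L}\star K)\simeq\iota^R K$. If you want to salvage your approach, you would need to replace the appeal to relative duality by an argument of this combinatorial-recollement type (or by the explicit inductive resolutions used by Gorsky--Hogancamp--Mellit--Nakagane in the $\GL_n$ case), since no duality formalism hands you the invertible twist directly.
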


This result is quite similar to the classical Verdier/Serre duality in algebraic geometry where the two types of pullbacks differ by the dualizing sheaf for a smooth morphism. The authors of~\cite{gorsky_serre_2019} thus refer to this result as a Serre duality for Hecke categories, where $\FT_{G,L}$ plays the role of the dualizing sheaf.

\subsection{Main result}
The main result of this paper is the following theorem, which generalizes \cref{thm:original_serre_Hecke} above to arbitrary connected reductive groups $G$ and arbitrary parabolic subgroups. This was given as \cite[Conjecture 1.8]{gorsky_serre_2019} in their original paper.

\begin{thm} \label{thm:main}
  We have an equivalence of functors $\iota^{\sfR} \simeq \iota^{\sfL}(\FT_{G,L} \star -)$.
\end{thm}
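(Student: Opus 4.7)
My plan is to interpret the desired equivalence as an instance of relative Verdier--Serre duality in a geometric model of the Hecke category, and then identify the resulting relative dualizing object with $\FT_{G,L}$.

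First, I would pass to a sheaf-theoretic realization of $\Ch^b(\SBim_W)$, e.g., as a full monoidal subcategory of mixed $(B \times B)$-equivariant sheaves on $G$ (equivalently, of sheaves on the stack $B \backslash G / B$), in which convolution of Rouquier complexes matches sheaf convolution; the companion papers on the mixed geometry of the Hecke category provide exactly such a framework. Under this identification, the parabolic embedding $\iota$ is realized as push--pull along a correspondence
\[
  B_L \backslash L / B_L \;\xleftarrow{\sim}\; P \backslash G / P \;\xleftarrow{p}\; P \backslash G / B \;\xrightarrow{q}\; B \backslash G / B,
\]
where the leftmost equivalence is the Morita-type equivalence coming from $P = L \ltimes U_P$, the map $q$ is smooth proper, and $p$ is a smooth proper fibration with fibers the flag variety $L/B_L$.

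With $\iota \simeq q_! \circ p^*$ (up to standard shifts and twists), its adjoints become $\iota^L \simeq p_! \circ q^*$ and $\iota^R \simeq p_* \circ q^!$. Since both $p$ and $q$ are smooth and proper, relative Verdier duality supplies equivalences $q^! \simeq q^*(-) \otimes \omega_q$ and $p_* \simeq p_!((-) \otimes \omega_p^{-1})$, with $\omega_p,\omega_q$ the (suitably shifted) relative dualizing sheaves. Composing these, $\iota^R$ differs from $\iota^L$ precisely by convolution with an object $\omega_{G/L} \in \Ch^b(\SBim_W)$ built from $\omega_p$ and $\omega_q$, and the theorem reduces to exhibiting an equivalence $\omega_{G/L} \simeq \FT_{G,L}$.

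The heart of the argument is this last identification. I would realize $\FT_G$ geometrically as a Serre-duality datum on the Hecke category: concretely, as the clean extension of a canonical rank-one local system from the regular locus of a Grothendieck--Springer model, so that convolution with $\FT_G$ implements the absolute Serre functor of $\Ch^b(\SBim_W)$ up to a fixed cohomological shift. Applying the same description to $L$ and using the compatibility of Serre structures under the monoidal fully faithful embedding $\iota$, the ratio of the two Serre functors matches $\omega_{G/L}$, yielding the claimed equivalence with $\FT_L^{-1} \star \FT_G = \FT_{G,L}$.

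\textbf{Main obstacle.} The principal difficulty is the geometric identification of the relative dualizing datum with the relative full twist; the six-functor manipulations above are essentially formal once the geometric model is in place. One must establish the Serre-functor interpretation of $\FT_G$ with sufficient care that its compatibility under restriction to a Levi is manifest. Granted this, the argument is uniform in $G$ and $P$, extending the type-$A$ calculation of \cite{gorsky_serre_2019} to arbitrary connected reductive groups and arbitrary parabolic subgroups.
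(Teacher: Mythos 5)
There is a genuine gap, and it sits exactly where you declare the argument ``essentially formal.'' In any geometric model that actually computes $\iota$, $\iota^L$, $\iota^R$ (the one used in the paper is the correspondence $B_L\backslash L/B_L \xleftarrow{p} B\backslash P/B \xrightarrow{q} B\backslash G/B$, with $p$ a $BU_P$-bundle and $q$ a \emph{closed immersion}), the map into $B\backslash G/B$ is proper but not smooth, so there is no equivalence $q^! \simeq q^*(-)\otimes\omega_q$ with $\omega_q$ invertible. Your proposed correspondence is also not correct as written: $P\backslash G/P$ is not equivalent to $B_L\backslash L/B_L$, and there is no natural map $P\backslash G/B \to B\backslash G/B$ in the direction you need. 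More importantly, if both legs really were smooth and proper, relative Verdier duality would force $\iota^R$ and $\iota^L$ to differ only by a cohomological shift and grading twist (convolution with a shifted copy of the monoidal unit), which contradicts the theorem, since $\FT_{G,L}$ is not a shift of the unit. The entire content of the statement is the failure of $q^!\simeq q^*$ for the closed embedding $q$, a failure measured by the open complement and not expressible as tensoring or convolving with any fixed ``relative dualizing object''; so the reduction to ``identify $\omega_{G/L}$ with $\FT_{G,L}$'' does not get off the ground. The second half of your plan --- realizing $\FT_G$ as a Serre-duality datum via a clean extension on a Grothendieck--Springer model and checking compatibility with restriction to $L$ --- is precisely the (relative form of the) theorem itself and is only asserted, not proved.

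For comparison, the paper also works in the geometric model $\Hecke_G\simeq\Shv_{\gr,c}(B\backslash G/B)$, but it never invokes duality for smooth maps. Instead it uses the recollement triangle $\iota\iota^R K \to K \to j_*j^*K$ attached to the closed embedding $q$ and its open complement $j$, together with two concrete inputs: (i) $\FT_{G,L}\star-$ carries $\ker\iota^R=\langle\nabla_u \mid u\notin W_L\rangle$ equivalently onto $\ker\iota^L=\langle\Delta_u \mid u\notin W_L\rangle$, proved by Bruhat-order bookkeeping for convolution with $\Delta_{w_0}^{\,2}$; and (ii) an explicit morphism $\alpha:\FT_G\to\FT_L$, built from the canonical map $\Delta_u\to\nabla_u$ with $u=w_0w_{0,L}$, which becomes an equivalence after applying $\iota^L$. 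Applying $\FT_{G,L}\star-$ and then $\iota^L$ to the recollement triangle, and using rigidity of $\Hecke_L$ to pull $\iota^R K$ out, yields $\iota^L(\FT_{G,L}\star K)\simeq\iota^R K$. If you want to salvage your approach, you would need to replace the smooth-base-change/duality step by an analysis of how convolution with the full twist interacts with the standard/costandard filtration of the complement --- which is in effect what the paper's two propositions do.
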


We will prove \cref{thm:main} by geometric means using a geometric avatar of $\Ch^b(\SBim_W)$. We expect that the general strategy of the proof can be adapted directly to the more combinatorial setup of Soergel bimodules. However, the geometric setup allows for a very efficient and transparent proof.

\begin{rmk} \label{rmk:infty_vs_triangulated}
  While the original conjecture in~\cite{gorsky_serre_2019} is formulated in terms of (functors between) triangulated categories, we work exclusively in the $\infty$-categorical setup in this paper. In particular, by default, all categories that appear in this paper are $\infty$-categories.
  
  To recover the triangulated version from the $\infty$-categorical version, one simply passes to the homotopy categories. The fact that adjoint functors between $\infty$-categories induce adjoint functors between their homotopy categories follows from the definition of adjunctions (via $\Hom$-sets/spaces) and the fact that, by definition, for any $\infty$-category $\mathcal{C}$, $\Hom_{\ho\mathcal{C}}(c,d) \simeq \pi_0 \Hom_{\mathcal{C}}(c,d)$, for all $c,d\in \mathcal{C}$, where $\ho\mathcal{C}$ is the homotopy category of $\mathcal{C}$.
\end{rmk}

\section{Geometric Hecke categories}
In this section, we will describe the geometric setup and explain how it is related to the algebraic setup involving Soergel bimodules described in the introduction.

\subsection{The geometric setup}
We define the finite Hecke category associated to $G$ to be
\[
  \Hecke_G \defeq \Shv_{\gr,c}(B\backslash G/B),
\]
where $\Shv_{\gr,c}(B\backslash G/B)$ is the category of graded sheaves on $B\backslash G/B$ developed in~\cite{ho_revisiting_2025}. We recall that the theory of graded sheaves is defined for any Artin stack of finite type and affine stabilizers and is equipped with a six-functor formalism that formally behaves like the classical six-functor formalism for constructible $\ell$-adic sheaves. In fact, for any such stack $\mathcal{Y}$, we have a functor of forgetting the grading
\[
  \oblv_{\gr}: \Shv_{\gr, c}(\mathcal{Y}) \to \Shv_c(\mathcal{Y})
\]
that realizes $\Shv_{\gr, c}(\mathcal{Y})$ as a graded lift of $\Shv_c(\mathcal{Y})$. Moreover, $\oblv_{\gr}$ is compatible with the six-functor formalism on both sides.

In this paper, we only use the formal aspects of the theory of graded sheaves.

\subsubsection{}
$\Hecke_G$ is a monoidal category with respect to the convolution product $\star$. More precisely, if we let $\Vect^{\gr}$ be the symmetric monoidal stable $\infty$-category of graded chain complexes of $\Qlbar$-vector spaces and $\Vect^{\gr,c}$ the full symmetric monoidal subcategory spanned by compact objects, i.e., those with finite-dimensional cohomology, supported in finitely many graded and cohomological degrees, then, $\Hecke_G$ is an algebra object in $\Vect^{\gr,c}\hphMod$, the symmetric monoidal category of $\Vect^{\gr,c}$-module categories. In particular, this means that the convolution product $\star$ is compatible with cohomological shifts $[-]$ and grading shifts $\lrangle{-}$.

When confusion is unlikely to arise, we will drop the $\star$, and write, for example $KL$ instead of $K \star L$ for $K,L\in \Hecke_G$. We have the following result, which allows us to work purely geometrically when studying $\Ch^b(\SBim_W)$.

\begin{thm}[{\cite[\crefnolink{mg:thm:geometric_incarnation_Hecke}]{ho_revisiting_2025}}] \label{thm:geometric_incarnation_Hecke}
  We have an equivalence of monoidal categories\footnote{Since the graded sheaf theory $\Shv_{\gr,c}(-)$ developed in \cite{ho_revisiting_2025} is based on the theory of $\ell$-adic sheaves, it has coefficients in $\Qlbar$. Since $\Qlbar \simeq \CC$ as fields, there is no difference between working over $\mathbb{C}$ and $\Qlbar$.}
  \[
    \Shv_{\gr,c}(B\backslash G/B) \simeq \Ch^b(\SBim_W).
  \]
\end{thm}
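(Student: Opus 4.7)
The plan is to produce a monoidal functor between the two sides matching a natural set of generators, establish full faithfulness via Soergel-style Hom-computations, and deduce essential surjectivity from a standard generation argument. Concretely, I would first construct a monoidal functor $\Phi: \SBim_W \to \Shv_{\gr,c}(B\backslash G/B)$ sending each generator $R \otimes_{R^s} R$ (for a simple reflection $s$) to the pushforward of the (appropriately shifted) constant graded sheaf along the closed inclusion $B\backslash P_s/B \hookrightarrow B\backslash G/B$, where $P_s$ is the minimal parabolic attached to $s$. Monoidality amounts to the statement that the convolution product on $\Hecke_G$ matches the tensor product $\otimes_R$ of bimodules; this is a base-change calculation on the convolution diagram $B\backslash G \times^B G/B \to B\backslash G/B$, together with the identification $H^*_B(\pt) \simeq R$. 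The graded formalism is essential: the formal grading shift $\lrangle{-}$ on bimodules must be matched with the Tate-type grading shift on $\Shv_{\gr,c}$.

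Next, I would compute $\Hom$-groups between images of Bott-Samelson bimodules on both sides. On the sheaf side, the $\Hom$-space of iterated convolutions of the generators reduces, via proper base change, to the (equivariant) Borel-Moore homology of a Bott-Samelson variety. On the bimodule side, Soergel's Hom-formula gives the same answer as a bigraded $R$-module, with matching grading conventions. Since Bott-Samelson bimodules generate $\SBim_W$ under finite direct sums, summands, and grading shifts, this matching upgrades $\Phi$ to a fully faithful additive monoidal functor, which then extends to a monoidal fully faithful functor $\Ch^b(\SBim_W) \to \Shv_{\gr,c}(B\backslash G/B)$ by applying it term-wise to chain complexes and totalizing.

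Essential surjectivity reduces to showing that the images of Bott-Samelson complexes generate $\Shv_{\gr,c}(B\backslash G/B)$ under finite direct sums, summands, grading and cohomological shifts, and cones. This is standard for constructible sheaves on $B\backslash G/B$ via the Bruhat stratification and the gluing triangles, and the graded refinement follows from the compatibility of $\oblv_\gr$ with the six-functor formalism. I expect the main obstacle to be the careful matching of grading conventions: one must ensure that the formal grading $\lrangle{-2}$ built into $R = \Sym(X^*(T) \otimes_\ZZ \CC \lrangle{-2})$ coincides with the grading on $\Shv_{\gr,c}$ engineered in \cite{ho_revisiting_2022}, rather than differing by a twist that would break monoidality.
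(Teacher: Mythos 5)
First, note that this paper does not prove \cref{thm:geometric_incarnation_Hecke} at all: it is imported wholesale from \cite{ho_revisiting_2022}, so the relevant comparison is with the argument there. Your outline (send Bott--Samelson bimodules to pushforwards of shifted constant sheaves on $B\backslash P_s/B$, check monoidality by base change on the convolution diagram, match $\Hom$'s via Soergel's Hom formula, then generate) is indeed the standard skeleton, and the additive part of it is essentially what happens in the reference, where the weight-zero pure objects of $\Shv_{\gr,c}(B\backslash G/B)$ are identified with $\SBim_W$ via equivariant hypercohomology and Soergel's Erweiterungssatz.

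The genuine gap is the sentence ``which then extends to a monoidal fully faithful functor $\Ch^b(\SBim_W) \to \Shv_{\gr,c}(B\backslash G/B)$ by applying it term-wise to chain complexes and totalizing.'' Full faithfulness of an additive functor on $\SBim_W$ does not formally propagate to its extension on bounded complexes: for that you need the mapping complexes in $\Shv_{\gr,c}(B\backslash G/B)$ between images of Bott--Samelson objects to be concentrated in cohomological degree $0$ (after the internal grading $\lrangle{-}$ is taken into account), together with an argument that the resulting realization functor from complexes over this additive subcategory to the ambient stable category is an equivalence onto the thick subcategory it generates. This concentration is not a bookkeeping issue about matching $\lrangle{-2}$ conventions, as your last paragraph suggests, but the substantive purity/formality theorem that the graded theory of \cite{ho_revisiting_2022} is built to provide (via a weight structure whose heart is the pure weight-zero objects). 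Indeed, your argument as written would apply verbatim to the ungraded category $\Shv_c(B\backslash G/B)$, where the corresponding statement is false because $\Hom(\IC_w, \IC_v[k])$ is nonzero for many $k>0$ (already $\End(\IC_e)$ is all of $H^*_B(\pt)$); it is exactly the extra grading and the attendant degree-$0$ concentration for pure objects that make $\Ch^b(\SBim_W)$, rather than some larger category, come out. So to complete the proof you must (i) state and use the vanishing $\Hom_{\Shv_{\gr,c}}(\Phi(M), \Phi(N)[k]\lrangle{l}) = 0$ for $k \neq 0$ and $M,N$ Bott--Samelson, and (ii) deduce from it, by d\'evissage along the Bruhat stratification, that the induced functor out of $\Ch^b(\SBim_W)$ is fully faithful and hits a set of generators; only then does your essential-surjectivity step close the argument.
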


In this paper, we will work exclusively in the geometric setting, viewing all objects in $\Ch^b(\SBim_W)$ as objects in $\Hecke_G$.

\begin{rmk}
  The geometric version $\Hecke_G$ plays an important role in low dimensional topology. For example, they are originally used in \cites{webster_geometric_2017} to geometrically define the \homflypt{} homology of links and used in~\cite{ho_graded_2023} to establish a relation between the \homflypt{} link homology and Hilbert schemes of points on $\mathbb{C}^2$, as conjectured by Gorsky, \Negut, and Rasmussen in~\cite{gorsky_flag_2021}. Note also that $\Hecke_G$ is denoted as $\Hecke_G^{\gr}$ in~\cite{ho_graded_2023}.
\end{rmk}

\subsubsection{Standard and co-standard objects}
By the Bruhat decomposition, we have a stratification of $B\backslash G/B$ by $B\backslash BwB/B$ for $w\in W$. Let $\jmath_w: B\backslash BwB/B \to B\backslash G/B$ denote the embedding. The standard (resp. co-standard) object $\Delta_w$ (resp. $\nabla_w$) is defined to be $\jmath_{w!} \Qlbar[\len(w)]\lrangle{\len(w)}$ (resp. $\jmath_{w*} \Qlbar[\len(w)]\lrangle{\len(w)}$).\footnote{There is no half Tate twist involved because, by construction, $\lrangle{-}$ corresponds to half Tate twist.} By construction, we always have a map
\[
  \Delta_w \simeq \jmath_{w!}\Qlbar[\len(w)]\lrangle{\len(w)} \to \jmath_{w*} \Qlbar[\len(w)]\lrangle{\len(w)} \simeq \nabla_w. \teq\label{eq:from_std_to_costd}
\]

Under the equivalence stated in \cref{thm:geometric_incarnation_Hecke}, $\Delta_w$ (resp. $\nabla_w$) corresponds to the Rouquier complex $R_\beta$ associated to the positive (resp. negative) braid $\beta$ associated to $w$. In particular, the full twist element $\FT_G$ corresponds to $\Delta_{w_0}^2$, where $w_0$ is the longest element of $W$.

\begin{rmk} \label{rmk:homotopically_coherent_Rouquier}
  In the geometric and $\infty$-categorical set up, the association $\beta \mapsto R_\beta$ assembles into a monoidal functor $\Br_W \to \Hecke_G$, by, for example,~\cite[Corollary 3.4.2]{tao_homotopical_2021}. This captures all the braid relations homotopically coherently. However, we do not need this stronger statement in this paper, as we require only the existence of the equivalences as written in \cref{eq:Rouquier_braid_relation}.
\end{rmk}

\subsection{Geometric parabolic induction and restriction functors} \label{subsect:geometric_parabolic_induction_restriction}
We will now describe the geometric version of the parabolic induction and restriction functors. Let $P$ be a proper standard parabolic subgroup of $G$ with Levi factor $L$. Let $B_L$ be the Borel subgroup of $L$ defined as the image of $B$ in $L$. Consider the following correspondence
\[
  \begin{tikzcd}
    & B\backslash P/B \ar{dl}[swap]{p} \ar{dr}{q} \\
    B_L\backslash L/B_L && B\backslash G/B.
  \end{tikzcd}
\]

\subsubsection{Parabolic induction functor}
Let $\iota \defeq q_! p^*: \Hecke_L \to \Hecke_G$ denote the functor of parabolic induction. It is easy to see that this is a monoidal functor.

Note that $p^*$ and $p_*$ are inverses of each other
\[
  \begin{tikzcd}
    \Shv_{\gr,c}(B_L\backslash L/B_L)  \ar[description,phantom]{r}{{\scriptstyle\simeq}} \ar[shift left=\arrdisplacementsp]{r}{p^*} & \ar[shift left=\arrdisplacementsp]{l}{p_*} \Shv_{\gr,c}(B\backslash P/B)
  \end{tikzcd}
\]
since $p$ is a bundle with fiber $\sfB U_P$, the classifying space of $U_P$, which is the unipotent radical of $P$. Since $q$ is a closed embedding, $q_! \simeq q_*$ is fully faithful. Thus, $\iota$ also fully faithful. We will therefore frequently view objects of $\Hecke_L$ as objects of $\Hecke_G$ without explicitly invoking the functor $\iota$.

\subsubsection{Parabolic restriction functors}
The functor $\iota$ admits a right adjoint, given by $\iota^{\sfR} \defeq p_* q^!$. Moreover, since $q$ is proper and $p_*$ and $p^*$ are mutually inverses, $\iota$ also admits a left adjoint, given by $\iota^{\sfL} \defeq p_* q^*$. Note that by fully faithfulness, $\iota^{\sfR} \iota \simeq \iota^{\sfL} \iota \simeq \id_{\Hecke_L}$.

\section{The proof}
We are now ready to prove \cref{thm:main}. We will prove the case $L=T$ in \cref{subsec:L=T_case} first, and then deduce the general case from it in \cref{subsec:reduction_to_L=T}. It is interesting to note that the general case follows from the special case $L=T$ via a purely formal argument.

\subsection{The $L=T$ case} \label{subsec:L=T_case}
When $L=T$, \cref{thm:main} is a consequence of \cref{prop:FT_to_FT_L} and the case where $L=T$ of \cref{prop:FT_G_L_induces_equiv_of_cats}.

\begin{prop} \label{prop:FT_G_L_induces_equiv_of_cats}
  The functor $\FT_{G,L} \star -$ induces an equivalence of categories $\ker \iota^{\sfR} \xrightarrow{\simeq} \ker \iota^{\sfL}$.
\end{prop}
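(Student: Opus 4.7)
Since $\FT_G$ and $\FT_L$ are Rouquier complexes of (invertible) braid elements, they are invertible in $\Hecke_G$ and $\Hecke_L$, respectively, so $\FT_{G,L} = \FT_L^{-1} \star \FT_G$ is invertible in $\Hecke_G$ and $\FT_{G,L} \star - : \Hecke_G \to \Hecke_G$ is an auto-equivalence. It therefore suffices to prove the one-sided inclusion $\FT_{G,L} \star \ker \iota^R \subseteq \ker \iota^L$ (the reverse inclusion follows symmetrically from the same argument applied to $\FT_{G,L}^{-1}$).

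Applying the recollement for the closed embedding $q$ with open complement $j: U \hookrightarrow B\backslash G/B$, and using that $p_*$ is an equivalence, we identify
\[
  \ker \iota^L = \ker q^* = \Im j_! \quad\text{and}\quad \ker \iota^R = \ker q^! = \Im j_*.
\]
As stable subcategories of $\Hecke_G$, these are generated (via the Bruhat stratification of $U$) by $\{\Delta_w\}_{w \notin W_L}$ and $\{\nabla_w\}_{w \notin W_L}$, respectively. Since convolution with $\FT_{G,L}$ is exact and continuous, the required inclusion reduces to verifying $\FT_{G,L} \star \nabla_w \in \ker \iota^L$ for each $w \notin W_L$.

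For such $w$, factor $w = v u$ with $v$ the nontrivial minimal-length representative of the coset $w W_L$ and $u \in W_L$. Then $\nabla_w \simeq \nabla_v \star \nabla_u$ with $\nabla_u \in \iota(\Hecke_L)$, and the projection formula applied to $\iota$ implies that $\ker \iota^L$ is closed under right convolution by $\iota(\Hecke_L)$, reducing the claim to $\FT_{G,L} \star \nabla_v \in \ker \iota^L$. Invoking centrality of $\FT_G$ in $\Hecke_G$ together with the braid identities $\Delta_v \star \nabla_v \simeq \mathbf{1}$ and $\Delta_{w_0} \simeq \Delta_v \star \Delta_{v^{-1} w_0}$ (when lengths add), one obtains $\FT_G \star \nabla_v \simeq \Delta_{v^{-1} w_0} \star \Delta_{w_0}$.

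The main obstacle is the final manipulation: rewriting $\FT_{G,L} \star \nabla_v \simeq \FT_L^{-1} \star \Delta_{v^{-1} w_0} \star \Delta_{w_0}$ using further braid identities---exploiting the parabolic factorization $w_0 = w_0^L \cdot w_{0,L}$ (where $w_0^L$ denotes the longest minimal-length coset representative of $W/W_L$) and the centrality of $\FT_L$ inside $\iota(\Hecke_L)$---into a form $\Delta_{w'} \star Y$ (or a finite extension thereof) with $w' \notin W_L$ and $Y \in \iota(\Hecke_L)$. At that point, closure of $\ker \iota^L$ under right $\iota(\Hecke_L)$-convolution completes the argument; the geometric setup of the paper may well afford a cleaner realization of this last manipulation, sidestepping explicit braid-group calculations.
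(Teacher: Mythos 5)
Your opening reductions are essentially the paper's: identifying $\ker \iota^L = \lrangle{\Delta_w \mid w \notin W_L}$ and $\ker \iota^R = \lrangle{\nabla_w \mid w \notin W_L}$ via the recollement for $q$, and using rigidity/$\Hecke_L$-linearity to strip off $\FT_L^{-1}$ and the $W_L$-part of $w$ (compare \cref{lem:FT_G_L_vs_FT_G_equiv_cat}). But the step you yourself flag as ``the main obstacle'' is where all the content of the proposition sits, and the mechanism you propose for it does not work. First, a convention slip: in the paper's normalization the cancellation is $\Delta_x \star \nabla_{x^{-1}} \simeq \munit$, not $\Delta_v \star \nabla_v \simeq \munit$, so the correct output of your manipulation is $\FT_G \star \nabla_v \simeq \Delta_{w_0} \star \Delta_{w_0 v}$ (equivalently $\Delta_{v w_0}\star\Delta_{w_0}$), not $\Delta_{v^{-1}w_0}\star\Delta_{w_0}$. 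More seriously, this object is a convolution of two standards whose lengths do not add, and in general it admits no factorization $\Delta_{w'} \star Y$ with $Y \in \iota(\Hecke_L)$: already for $G$ of type $A_2$ with $W_L = \{e, s_1\}$ and $v = s_2$, the relevant braid is $\sigma_1\sigma_2\sigma_1\sigma_2\sigma_1$, and requiring it to lie in $\sigma_{w'}\cdot \Br_{W_L} = \sigma_{w'}\lrangle{\sigma_1}$ forces (after comparing images in $W$ and exponent sums) $\sigma_1\sigma_2\sigma_1\sigma_2\sigma_1 = \sigma_2\sigma_1^4$, i.e.\ $\sigma_1^2 = \sigma_2^2$, which is false in $\Br_3$. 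Allowing ``a finite extension thereof'' is not a harmless weakening --- it is exactly the nontrivial categorical claim that still has to be proved, and no braid-group identity will produce it.

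The idea your outline is missing is the paper's support argument in \cref{lem:FT_G_equiv_cat}. One application of $\Delta_{w_0}$ sends the costandard generators of $\ker\iota^R$ to standards: $\Delta_{w_0}\star\nabla_u \simeq \Delta_{w_0 u}$, so $\Delta_{w_0}\ker\iota^R = \lrangle{\Delta_t \mid t \in \tau}$ with $\tau = w_0(W\setminus W_L)$. The key observation is that $\tau$ is closed in the Bruhat order (since $u \mapsto w_0 u$ reverses the order and $W_L$ is Bruhat-closed), hence the corresponding union of strata is a closed substack, and on a closed union of strata the subcategory generated by standards coincides with the one generated by costandards: $\lrangle{\Delta_t \mid t\in\tau} = \lrangle{\nabla_t \mid t\in\tau}$. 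Only after this exchange can the second copy of $\Delta_{w_0}$ be applied generator by generator, giving $\Delta_{w_0}\star\nabla_{w_0 u} \simeq \Delta_u$ with $u \notin W_L$, i.e.\ exactly $\ker\iota^L$; note this computes $\Delta_{w_0}^2\ker\iota^R = \ker\iota^L$ as an equality, so no separate argument for the reverse inclusion is needed. This standard/costandard swap over a Bruhat-closed set is a statement about generated subcategories (supports of complexes), not an identity among Rouquier complexes, and without it your argument stalls exactly where you anticipated.
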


\begin{prop} \label{prop:FT_to_FT_L}
  There exists a morphism $\alpha: \FT_G \to \munit$ such that $\iota^{\sfL}(\alpha): \iota^{\sfL}(\FT_G) \to \iota^{\sfL}(\munit) \simeq \munit$ is an equivalence.
\end{prop}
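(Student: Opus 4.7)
The plan is to reduce the existence of $\alpha$ with $\iota^L(\alpha)$ an equivalence to the single equivalence $\iota^L(\FT_G) \simeq \FT_L$ in $\Hecke_L$, and to establish the latter by combining the length-additive decomposition of the longest Weyl element with the geometric description of $\iota^L$.

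\emph{Reduction via adjunction.} By the adjunction $\iota^L \dashv \iota$, morphisms $\alpha: \FT_G \to \iota(\FT_L)$ correspond bijectively to morphisms $\widetilde{\alpha}: \iota^L(\FT_G) \to \FT_L$ in $\Hecke_L$. Since $\iota$ is fully faithful, the counit $\iota^L \iota \xrightarrow{\sim} \id_{\Hecke_L}$ is an equivalence, and the triangle identities for the adjunction identify $\iota^L(\alpha)$ with $\widetilde{\alpha}$ (up to this canonical equivalence). Consequently, $\iota^L(\alpha)$ is an equivalence if and only if $\widetilde{\alpha}$ is, so it suffices to exhibit an equivalence $\iota^L(\FT_G) \simeq \FT_L$.

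\emph{Decomposition and projection formula.} Using the length-additive factorization $w_0 = w_{0,L} \cdot w^L$, where $w^L$ is the longest minimal-length coset representative of $W_L \backslash W$, one has $\Delta_{w_0} \simeq \Delta_{w_{0,L}} \star \Delta_{w^L}$ and hence
\[
  \FT_G \simeq \Delta_{w_{0,L}} \star \Delta_{w^L} \star \Delta_{w_{0,L}} \star \Delta_{w^L}.
\]
The functor $\iota^L = p_* q^*$ satisfies the projection formula $\iota^L(\iota(A) \star B) \simeq A \star \iota^L(B)$, which follows from the monoidality of $q^*$ and $p^*$ together with the projection formula for $p_*$ in the six-functor formalism of graded sheaves of \cite{ho_revisiting_2022}. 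Applying this to peel off the outer $\Delta_{w_{0,L}} = \iota(\Delta_{w_{0,L}}^L)$ reduces the desired equivalence to the identification
\[
  \iota^L\bigl(\Delta_{w^L} \star \Delta_{w_{0,L}} \star \Delta_{w^L}\bigr) \simeq \Delta_{w_{0,L}}^L.
\]

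\emph{Geometric computation and main obstacle.} Since $q: B\backslash P/B \hookrightarrow B\backslash G/B$ is a closed embedding whose image is the union of Bruhat cells indexed by $W_L$, the pullback $q^*$ annihilates any sheaf supported on cells outside $W_L$. Unwinding the triple convolution via base change along the threefold multiplication map $B\backslash G\times^B G\times^B G/B \to B\backslash G/B$, one analyzes the cells of this convolution that survive restriction to $B\backslash P/B$. The combinatorial identity $w_{0,L} w^L w_{0,L} = (w^L)^{-1}$, inherited from $w_0$ being an involution, organizes these surviving contributions; after pushing forward along $p$ (which is a $BU_P$-bundle, so $p_* \simeq (p^*)^{-1}$), they are expected to collapse to $\Delta_{w_{0,L}}^L$. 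The principal difficulty of the proof lies precisely here: controlling the Bruhat cells of the triple convolution inside $B\backslash P/B$ and keeping track of the grading and cohomological shifts arising from the graded sheaf theory of \cite{ho_revisiting_2022}.
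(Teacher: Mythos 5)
Your reduction via the adjunction $\iota^L \dashv \iota$ is sound: since $\iota$ is fully faithful, producing $\alpha$ with $\iota^L(\alpha)$ an equivalence is indeed equivalent to exhibiting \emph{some} equivalence $\iota^L(\FT_G) \simeq \FT_L$, and that equivalence is also what the paper ultimately establishes (in the form $\iota^L(\alpha)$). The problem is that your argument stops exactly where the proposition's actual content begins. After the (plausible, though only sketched) linearity statement $\iota^L(\iota(A)\star B)\simeq A\star\iota^L(B)$ --- which the paper justifies via rigidity of $\Hecke_L$ --- everything is reduced to $\iota^L\bigl(\Delta_{w^L}\star\Delta_{w_{0,L}}\star\Delta_{w^L}\bigr)\simeq\Delta_{w_{0,L}}$ in $\Hecke_L$, and for this you offer only the statement that the surviving Bruhat cells ``are expected to collapse,'' explicitly flagging it as the principal difficulty. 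That is not a proof, and the proposed cell-by-cell analysis is not a routine verification: $\Delta_{w^L}\star\Delta_{w_{0,L}}\star\Delta_{w^L}$ is the Rouquier complex of a non-reduced positive braid word (already for $G=\GL_3$, $L=\GL_2\times\Gm$ one has $\len(w^L)+\len(w_{0,L})=3$ while $\len(w^Lw_{0,L})=1$), so it is a genuine complex whose terms individually do not vanish under $q^*$; one needs a mechanism to organize the cancellations, and none is supplied. The identity $w_{0,L}w^Lw_{0,L}=(w^L)^{-1}$ is correct but is a statement about Weyl group elements, not about the convolution complex.

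For comparison, the paper sidesteps any direct computation of $\iota^L(\FT_G)$: writing $u=w_0w_{0,L}^{-1}$ (length-additively $w_0=uw_{0,L}$), it notes $\FT_L\simeq\FT_G\,\nabla_{u^{-1}}\nabla_u$ (using that $\FT_G$ commutes with braid elements), defines $\alpha$ by inserting the canonical map $\Delta_u\to\nabla_u$ into $\FT_G\,\nabla_{u^{-1}}\Delta_u\simeq\FT_G$, and then shows the cone lies in $\ker\iota^L$ by transporting it across $\FT_G$ (\cref{lem:FT_G_equiv_cat}) into $\ker\iota^R$, where it dies for purely support-theoretic reasons: $\nabla_{u^{-1}}\Cone(\Delta_u\to\nabla_u)$ is built from $\nabla_{u^{-1}v}$ with $v<u$, and $u^{-1}v\notin W_L$ (\cref{lem:shifted_cone_computation,lem:no_intersection_with_W_L}). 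If you want to salvage your route, you would either need to carry out the convolution analysis you postponed, or replace it by an argument of this support-theoretic kind; as written, the proposal assumes the conclusion at its crucial step.
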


We will now prove \cref{thm:main} for the case where $L=T$, assuming \cref{prop:FT_G_L_induces_equiv_of_cats,prop:FT_to_FT_L}.

\begin{proof}[Proof of \cref{thm:main} for $L=T$]

  Let $j: B\backslash (G - B) /B \to B\backslash G/B$ denote the complement of the closed immersion $q$. For any $K\in \Hecke_G$, we have the following (co)fiber sequence
  \[
    q_! q^! K \to K \to j_* j^* K
  \]
  which is equivalent to
  \[
    q_! p^* p_* q^! K \to K \to j_* j^* K
  \]
  and hence, to
  \[
    \iota \iota^{\sfR} K \to K \to j_* j^* K.
  \]

  Applying $\FT_{G} \star -$ to the above triangle, we obtain
  \[
    \FT_{G} \iota \iota^{\sfR} K \to \FT_{G} K \to \FT_{G} j_* j^* K.
  \]
  Since $j_* j^* K \in \ker \iota^{\sfR}$, we get $\FT_{G} j_* j^* K \in \ker \iota^{\sfL}$, by \cref{prop:FT_G_L_induces_equiv_of_cats}. Thus, applying $\iota^{\sfL}$ to the above triangle, we obtain
  \[
    \iota^{\sfL}(\FT_{G} K) \simeq \iota^{\sfL}(\FT_{G} \iota \iota^{\sfR} K). \teq\label{eq:equiv_from_exact_triangle}
  \]

  But now,
  \begin{align*}
    \iota^{\sfL}(\FT_{G} \iota\iota^{\sfR} K)
    \simeq \iota^{\sfL}(\FT_G) \iota^{\sfR}(K)
    \xrightarrow[\simeq]{\iota^{\sfL}(\alpha)\iota^{\sfR}(K)} \iota^{\sfR}(K). \teq\label{eq:contracting_i_s}
  \end{align*}
  Here, the first equivalence follows from the fact that $\Hecke_{T}$ is rigid (see~\cite[\crefnolink{gnr:prop:Hecke_rigid}]{ho_graded_2023}), which, by \cite[\crefnolink{mg:cor:lax_implies_strict_rigid}]{ho_revisiting_2025}, implies that $\iota^{\sfL}$ is $\Hecke_T$-linear with respect to the actions of $\Hecke_T$ on itself and on $\Hecke_G$ via $\iota$.\footnote{\label{ftn:rigidity_remark}Strictly speaking, the results cited are stated for the ind-completed (i.e., renormalized) Hecke categories. To use those results, we can, for example, ind-complete all the categories involved, and then restrict back to the full subcategory of compact objects. A similar maneuver is done in \cref{subsec:reduction_to_L=T} below.} The second equivalence follows from \cref{prop:FT_to_FT_L}.

  Combining \cref{eq:equiv_from_exact_triangle,eq:contracting_i_s} and observing that all the morphisms involved are natural in $K$, we obtain the desired equivalence.
\end{proof}

We defer the proofs of \cref{prop:FT_G_L_induces_equiv_of_cats,prop:FT_to_FT_L} to \cref{subsec:proof_prop_FT_G_L_induces_equiv_of_cats,subsec:proof_prop_FT_to_FT_L}, respectively.

\subsection{From $L=T$ to general $L$} \label{subsec:reduction_to_L=T}
We will now deduce \cref{thm:main} for a general $L$ from the case where $L=T$ proved in \cref{subsec:L=T_case}. As we work with multiple Levi's in this subsection, we use $\iota_{L, G}$, $\iota_{L,G}^{\sfL}$, and $\iota_{L,G}^{\sfR}$ to denote the functors previously denoted as $\iota$, $\iota^{\sfL}$ and $\iota^{\sfR}$, respectively.

\subsubsection{Renormalized Hecke categories}
For any reductive group $G$, we define $\Hecke_G^{\ren} \defeq \Ind(\Hecke_G)$ to be the ind-completion of $\Hecke_G$, which is a monoidal category containing $\Hecke_G$ as a full subcategory of compact objects. In fact, $\Hecke_G^{\ren}$ is a compactly generated rigid monoidal category in the sense of~\cite[Volume I, Chapter 1, Definition 9.1.2]{gaitsgory_study_2017} (see~\cite[\crefnolink{gnr:prop:Hecke_rigid}]{ho_graded_2023}).

Ind-extending $\iota_{L, G}$, $\iota_{L, G}^{\sfL}$, and $\iota_{L, G}^{\sfR}$, we obtain functors
\[
  \begin{tikzcd}[column sep=large]
    \Hecke_L^{\ren} \ar[hookrightarrow]{r}[description]{\iota_{L,G}^{\ren}} & \ar[bend left=4*\arrdisplacementsp, shift left=\arrdisplacementsp]{l}{\iota_{L,G}^{\ren, \sfR}} \ar[bend right=4*\arrdisplacementsp, shift right=\arrdisplacementsp]{l}[swap]{\iota_{L,G}^{\ren, \sfL}} \Hecke_G^{\ren}.
  \end{tikzcd}
\]
We still have the same adjunctions as before,\footnote{which justifies, for example, the notation $\iota_{L, G}^{\ren, \sfL}$ (as opposed to $\iota_{L,G}^{\sfL,\ren}$)} and moreover, all of these functors are continuous (i.e., they preserve colimits) and compact preserving. Due to the rigidity of $\Hecke_L^{\ren}$, $\iota_{L, G}^{\ren, \sfL}$ and $\iota_{L,G}^{\ren, \sfR}$ are also $\Hecke_L^{\ren}$-linear (with respect to both left and right actions).

\subsubsection{A further right adjoint} Switching to the renormalized version allows us to invoke the adjoint functor theorem. Namely, $\iota_{L, G}^{\ren, \sfR}$ acquires a further right adjoint $\iota_{L, G}^{\ren, \sfRR}$, which is also continuous, since $\iota_{L,G}^{\ren, \sfR}$ preserves compact objects. Moreover, $\iota_{L,G}^{\ren, \sfRR}$ is also $\Hecke_L^{\ren}$-linear due to the rigidity of $\Hecke_L^{\ren}$. Thus, $\iota_{L,G}^{\ren, \sfRR}$ is determined by where it sends the unit object $\munit$. More precisely, we have the following equivalence of functors
\[
  \iota_{L,G}^{\ren, \sfRR} \simeq \iota_{L,G}^{\ren, \sfRR}(\munit) \star \iota_{L, G}^{\ren}(-).
\]

\begin{lem}
  We have an equivalence of functors $\iota_{T,G}^{\ren, \sfRR} \simeq \FT_G^{-1} \star \iota_{T, G}^{\ren}(-)$. Equivalently, $\iota_{T,G}^{\ren, \sfRR}(\munit) \simeq \FT_G^{-1}$.
\end{lem}
\begin{proof}
  By the case where $L=T$ of \cref{thm:main} proved in \cref{subsec:L=T_case}, we have an equivalence of functors
  \[
    \iota_{T,G}^{\ren, \sfR} \simeq \iota_{T,G}^{\ren, \sfL}(\FT_G \star -).
  \]
  Taking right adjoints on both sides, using the fact that the right adjoint of $\iota_{T,G}^{\ren, \sfL}$ is $\iota_{T,G}^{\ren}$, we obtain
  \[
    \iota_{T,G}^{\ren, \sfRR} \simeq \FT_G^{-1} \star \iota_{T,G}^{\ren}(-).
  \]
\end{proof}

\begin{cor} \label{cor:computing_iota_L_G_ren_RR}
  $\iota_{L,G}^{\ren, \sfRR}(\munit) \simeq \FT_{G,L}^{-1}$, and hence, we have an equivalence of functors $\iota_{L,G}^{\ren, \sfRR} \simeq \FT_{G,L}^{-1} \star \iota_{L,G}^{\ren}(-)$.
\end{cor}
\begin{proof}
  The second part follows from the first part as discussed above. For the first part, observe that
  \[
    \iota_{L,G}^{\ren, \sfRR}(\munit) \star \FT_L^{-1}
    \simeq \iota_{L,G}^{\ren, \sfRR}(\FT_L^{-1})
    \simeq \iota_{L,G}^{\ren, \sfRR}(\iota_{T,L}^{\ren, \sfRR}(\munit))
    \simeq \iota_{T,G}^{\ren, \sfRR}(\munit)
    \simeq \FT_G^{-1}.
  \]
  Thus, $\iota_{L,G}^{\ren, \sfRR}(\munit) \simeq \FT_G^{-1} \star \FT_L \simeq \FT_{G,L}^{-1}$.
\end{proof}

\subsubsection{Concluding the proof of \cref{thm:main}} Taking left adjoints on both sides of the equivalence in \cref{cor:computing_iota_L_G_ren_RR}, we obtain an equivalence of functors
\[
  \iota_{L,G}^{\ren, \sfR} \simeq \iota_{L,G}^{\ren, \sfL}(\FT_{G,L} \star -).
\]
Since all functors involved preserve compact objects, restricting to the full subcategories of compact objects, we obtain the desired equivalence of functors $\iota_{L,G}^{\sfR} \simeq \iota_{L,G}^{\sfL}(\FT_{G,L} \star -)$.
\qed

\subsection{Proof of \cref{prop:FT_G_L_induces_equiv_of_cats}} \label{subsec:proof_prop_FT_G_L_induces_equiv_of_cats}
We start with the following lemma.

\begin{lem} \label{lem:FT_G_L_vs_FT_G_equiv_cat}
  The following statements are equivalent:
  \begin{enumerate}
    \item \label{item:FT_G_L_equiv_cat} $\FT_{G,L} \star -$ induces an equivalence of categories $\ker \iota^{\sfR} \xrightarrow{\simeq} \ker \iota^{\sfL}$.
    \item \label{item:FT_G_equiv_cat} $\FT_{G} \star - $ induces an equivalence of categories $\ker \iota^{\sfR} \xrightarrow{\simeq} \ker \iota^{\sfL}$.
  \end{enumerate}
\end{lem}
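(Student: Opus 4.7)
The plan is to exploit the fact that $\FT_L \in \Hecke_L$ is invertible and that convolution with $\FT_L^{\pm 1}$ commutes with both $\iota^L$ and $\iota^R$, so that $\FT_L^{\pm 1} \star -$ induces mutually inverse auto-equivalences of each of $\ker \iota^L$ and $\ker \iota^R$. Once this is in place, the lemma is immediate from the definition $\FT_{G,L} = \FT_L^{-1} \star \FT_G$, because the two functors $\FT_{G,L} \star -$ and $\FT_G \star -$ out of $\ker \iota^R$ will then differ by an auto-equivalence of the target $\ker \iota^L$.

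Concretely, I will first observe that $\FT_L$ is invertible in $\Hecke_L$: it is the Rouquier complex attached to the full-twist braid of $W_L$, and the Rouquier complex of the inverse braid furnishes an inverse. Hence, after identifying $\FT_L$ with $\iota(\FT_L) \in \Hecke_G$, the functors $\FT_L \star -$ and $\FT_L^{-1} \star -$ are mutually inverse auto-equivalences of $\Hecke_G$. Next, I will invoke the rigidity of $\Hecke_L$ (used exactly as in the proof of \cref{thm:main}, via~\cite[\crefnolink{gnr:prop:Hecke_rigid}]{ho_graded_2023}) to conclude that both adjoints $\iota^L$ and $\iota^R$ are $\Hecke_L$-linear. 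This yields natural equivalences
\[
  \iota^R(\FT_L^{\pm 1} \star K) \simeq \FT_L^{\pm 1} \star \iota^R(K), \qquad \iota^L(\FT_L^{\pm 1} \star K) \simeq \FT_L^{\pm 1} \star \iota^L(K)
\]
for every $K \in \Hecke_G$, which, together with the invertibility of $\FT_L$, show that $\FT_L^{\pm 1} \star -$ restricts to an auto-equivalence of each of $\ker \iota^L$ and $\ker \iota^R$.

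The final step is the formal comparison. Since $\FT_{G,L} \star - \simeq (\FT_L^{-1} \star -) \circ (\FT_G \star -)$, the functor $\FT_{G,L} \star -$ carries $\ker \iota^R$ into $\ker \iota^L$ if and only if $\FT_G \star -$ does, and in that case the two resulting functors $\ker \iota^R \to \ker \iota^L$ differ by post-composition with the auto-equivalence $\FT_L^{-1} \star -$ of $\ker \iota^L$. Consequently, one is an equivalence if and only if the other is, which yields the desired equivalence of the two statements.

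I do not anticipate a real obstacle here: once the $\Hecke_L$-linearity of $\iota^L$ and $\iota^R$ is granted — which it is, by the reference already used in the proof of \cref{thm:main} — every remaining step is a direct manipulation with invertible endofunctors. The mildest subtlety is deciding on which side to act by $\Hecke_L$; because $\iota$ is a monoidal embedding, left and right $\Hecke_L$-actions on $\Hecke_G$ are both available and both compatible with $\iota^L$ and $\iota^R$, so either convention suffices.
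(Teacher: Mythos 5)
Your proposal is correct and follows essentially the same route as the paper: rigidity of $\Hecke_L$ gives $\Hecke_L$-linearity of $\iota^L$ and $\iota^R$, hence $\FT_L^{\pm1}\star-$ is an auto-equivalence of each kernel, and the claim follows from $\FT_{G,L}\star-\simeq \FT_L^{-1}\star\FT_G\star-$. The paper's own proof is exactly this argument, stated more briefly.
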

\begin{proof}
  We will show that \ref{item:FT_G_equiv_cat} implies \ref{item:FT_G_L_equiv_cat}. The converse is similar.

  By rigidity of $\Hecke_L$, which implies that $\iota^{\sfL}$ and $\iota^{\sfR}$ are $\Hecke_L$-linear, we see that $\FT_L \star -$ induces an equivalence of categories between $\ker \iota^{\sfR}$ (resp. $\ker \iota^{\sfL}$) with itself. But now, this observation, combined with the current assumption that $\FT_G \star -$ induces an equivalence between $\ker \iota^{\sfR}$ and $\ker \iota^{\sfL}$, allows us to conclude since
  \[
    \FT_{G,L} \star - \simeq \FT_{L}^{-1} \star \FT_G\star -.
  \]
\end{proof}

It remains to prove \cref{lem:FT_G_L_vs_FT_G_equiv_cat}.\ref{item:FT_G_equiv_cat}.

\begin{lem} \label{lem:FT_G_equiv_cat}
  $\FT_G \star -$ induces an equivalence of categories $\ker \iota^{\sfR} \xrightarrow{\simeq} \ker\iota^{\sfL}$.
\end{lem}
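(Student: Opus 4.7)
I would combine the recollement structure associated with the closed embedding $q$ with the invertibility of the full twist in $\Hecke_G$. Let $j: B\backslash(G - P)/B \hookrightarrow B\backslash G/B$ denote the open complement of $q$. The standard fiber sequences
\begin{align*}
  q_! q^! K \to K \to j_* j^* K \qquad \text{and} \qquad j_! j^* K \to K \to q_* q^* K,
\end{align*}
combined with the fact that $p^*$ is an equivalence with inverse $p_*$, identify $\ker \iota^R$ with the essential image of $j_*$ and $\ker \iota^L$ with the essential image of $j_!$; both subcategories are canonically equivalent to $\Shv_{\gr, c}(B\backslash(G-P)/B)$ via $j^*$.

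Since $\FT_G \simeq \Delta_{w_0}^2$ is invertible in $\Hecke_G$ with inverse $\nabla_{w_0}^2$ (coming from $\Delta_{w_0} \star \nabla_{w_0} \simeq \munit$), convolution with $\FT_G$ is already an auto-equivalence of the ambient $\Hecke_G$. Consequently, it suffices to prove the two inclusions
\begin{align*}
  \FT_G \star \ker \iota^R \subseteq \ker \iota^L \qquad \text{and} \qquad \FT_G^{-1} \star \ker \iota^L \subseteq \ker \iota^R,
\end{align*}
whose combination yields the claimed restricted equivalence.

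For the first inclusion, I would test on the generating family $\{\nabla_w : w \in W \setminus W_L\}$ of $\ker \iota^R$. The length-additive factorization $w_0 = (w_0 w) \cdot w^{-1}$, which holds for every $w \in W$, gives $\Delta_{w_0} \simeq \Delta_{w_0 w} \star \Delta_{w^{-1}}$; combined with $\nabla_w \simeq \Delta_{w^{-1}}^{-1}$, this yields the clean identity $\FT_G \star \nabla_w \simeq \Delta_{w_0} \star \Delta_{w_0 w}$. The main obstacle is to verify that this iterated convolution of standards lies in the essential image of $j_!$ whenever $w \notin W_L$. A natural approach is to produce a standard filtration of $\Delta_{w_0} \star \Delta_{w_0 w}$ whose graded pieces are of the form $\Delta_y \lrangle{\cdot}[\cdot]$ for $y \in W \setminus W_L$, each of which individually lies in $\ker \iota^L$; the combinatorial heart is to verify that no $y \in W_L$ appears in this filtration, a property that should reflect the Bruhat-order interaction of $w_0$ and $w_0 w$ with the parabolic subgroup $W_L$ under the hypothesis $w \notin W_L$. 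The symmetric inclusion is handled analogously, via the parallel identity $\nabla_{w_0} \star \Delta_w \simeq \nabla_{w_0 w}$, which reduces $\FT_G^{-1} \star \Delta_w$ to $\nabla_{w_0} \star \nabla_{w_0 w}$, and then by the corresponding costandard-filtration argument.
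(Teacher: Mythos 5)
Your reduction is sound as far as it goes: the identifications $\ker\iota^R=\ker q^!=\Im(j_*)=\lrangle{\nabla_w \mid w\notin W_L}$ and $\ker\iota^L=\ker q^*=\Im(j_!)=\lrangle{\Delta_w \mid w\notin W_L}$ are correct, invertibility of $\FT_G$ does reduce the lemma to the two inclusions, and the computations $\FT_G\star\nabla_w\simeq\Delta_{w_0}\star\Delta_{w_0w}$ and $\FT_G^{-1}\star\Delta_w\simeq\nabla_{w_0}\star\nabla_{w_0w}$ are valid consequences of the length-additive factorization $w_0=(w_0w)\cdot w^{-1}$. However, the step you flag as ``the combinatorial heart'' --- that the standard (resp.\ costandard) filtration of $\Delta_{w_0}\star\Delta_{w_0w}$ (resp.\ $\nabla_{w_0}\star\nabla_{w_0w}$) involves only $\Delta_y$ (resp.\ $\nabla_y$) with $y\notin W_L$ --- is not an incidental verification but is precisely the entire content of the lemma, and you give no argument for it. Equivalently, you must show $q^*(\Delta_{w_0}\star\Delta_{w_0w})=0$ for $w\notin W_L$; analyzing this through a filtration of a convolution of two standards amounts to controlling which terms appear in the expansion of $T_{w_0}T_{w_0w}$ in the standard basis of the Hecke algebra, which is not obviously easier than the statement you are trying to prove, and ``should reflect the Bruhat-order interaction'' is where a proof has to live.

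The paper avoids confronting such a filtration by applying $\Delta_{w_0}$ one factor at a time. A single convolution sends each generator to a single standard, $\Delta_{w_0}\star\nabla_u\simeq\Delta_{w_0u}$, so $\Delta_{w_0}\star\ker\iota^R=\lrangle{\Delta_t\mid t\in\tau}$ with $\tau=\{w_0u\mid u\notin W_L\}$. The key observation is then geometric: $\tau$ is closed in the Bruhat order (left multiplication by $w_0$ reverses the order and $W_L$ is Bruhat-closed), so the corresponding union of strata is a closed substack, on which $!$- and $*$-extensions generate the same subcategory; hence $\lrangle{\Delta_t\mid t\in\tau}=\lrangle{\nabla_t\mid t\in\tau}$. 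Applying $\Delta_{w_0}\star-$ once more turns each $\nabla_{w_0u}$ into $\Delta_u$, giving exactly $\ker\iota^L$. If you want to complete your route, you would need to either import this standard/costandard swap on the closed union of strata indexed by $\tau$ (at which point your argument collapses into the paper's), or supply an independent proof of the vanishing $q^*(\Delta_{w_0}\star\Delta_{w_0w})=0$ and $q^!(\nabla_{w_0}\star\nabla_{w_0w})=0$; as it stands the proposal is a correct reduction with the decisive step missing.
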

\begin{proof}
  Observe that
  \[
    \ker \iota^{\sfR} = \lrangle{\nabla_u \mid u \notin W_L},
  \]
  where the RHS denotes the smallest $\Vect^{\gr,c}$-linear full stable $\infty$-subcategory  of $\Hecke_G$ containing all objects of the form $\nabla_u$ for $u \notin W_L$. In other words, it is the smallest full subcategory of $\Hecke_G$ containing $\nabla_u$ for $u\notin W_L$ that is closed under finite direct sums, shifts, cones, and grading shifts.

  Since for any $u\in W$,
  \[
    \len(w_0u) + \len(u^{-1}) = \len(w_0) - \len(u) + \len(u^{-1}) = \len(w_0),
  \]
  $\Delta_{w_0} \simeq \Delta_{w_0u} \Delta_{u^{-1}} \simeq \Delta_{w_0u} (\nabla_u)^{-1}$, or equivalently,
  \[
    \Delta_{w_0} \nabla_u \simeq \Delta_{w_0u}. \teq\label{eq:Delta_w_0_nabla_u}
  \]
  Thus,
  \[
    \Delta_{w_0} \ker \iota^{\sfR} = \lrangle{\Delta_{w_0u} \mid u \notin W_L} = \lrangle{\Delta_t \mid t\in \tau}
  \]
  where
  \[
    \tau = \{w_0 u \mid u \notin W_L\} \subset W.
  \]

  Observe that $\tau$ is closed with respect to the Bruhat order on $W$, or equivalently, the union of $B\backslash BvB/B$ for $v\in \tau$ is a closed substack of $B\backslash G/B$. Indeed, since the map $W \to W$ given by $u \mapsto w_0 u$ reverses the Bruhat order (see~\cite[Example 3, p. 119]{humphreys_reflection_1990}), to show that $\tau$ is closed, it suffices to show that $W \setminus W_L$ is open. But this is equivalent to the fact that $W_L$ is closed, which is true.

  Thus,
  \[
    \Delta_{w_0} \ker \iota^{\sfR}
    = \lrangle{\Delta_t \mid t\in \tau}
    = \lrangle{\nabla_t \mid t\in \tau}
    = \lrangle{\nabla_{w_0u} \mid u \notin W_L}.
  \]
  But now, we have
  \begin{align*}
    \Delta_{w_0}^2 \ker \iota^{\sfR}
    &= \lrangle{\Delta_{w_0} \nabla_{w_0u} \mid u\notin W_L} \\
    &= \lrangle{\Delta_{w_0^2 u} \mid u\notin W_L} \\
    &= \lrangle{\Delta_{u} \mid u\notin W_L} \\
    &= \ker \iota^{\sfL},
  \end{align*}
  and we are done.
\end{proof}

The proof of \cref{prop:FT_G_L_induces_equiv_of_cats} is complete.
\qed

\subsection{Proof of \cref{prop:FT_to_FT_L}} \label{subsec:proof_prop_FT_to_FT_L}
\subsubsection{The morphism $\alpha$}
The map $\alpha: \FT_G \to \munit$ is defined to be $\FT_G = \Delta_{w_0} \Delta_{w_0} \to \Delta_{w_0} \nabla_{w_0} \simeq \munit$, where the non-trivial morphism is given by \cref{eq:from_std_to_costd}.

\subsubsection{$\iota^{\sfL}(\alpha)$ is an equivalence}
To show that $\iota^{\sfL}(\alpha)$ is an equivalence, it suffices to show that $\Cone(\alpha) \in \ker \iota^{\sfL}$, which is equivalent to showing that
\[
  \Delta_{w_0} \Cone(\Delta_{w_0} \to \nabla_{w_0}) \in \ker \iota^{\sfL}.
\]

This is a direct consequence of the following two lemmas and the fact that $i^* j_! \simeq 0$ where $i$ and $j$ form a pair of closed embedding and open complement.

\begin{lem} \label{lem:no_intersection_with_W_L}
  $\{w_0 v \mid v \neq w_0\} \cap \{1\} = \emptyset$.
\end{lem}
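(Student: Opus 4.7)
The plan is to exploit that $u = w_0 w_{0,L}^{-1}$ is the minimal-length representative of the coset $w_0 W_L \in W/W_L$, and then derive a length contradiction. Recall the standard fact about parabolic decompositions: every $w \in W$ factors uniquely as $w = u'y$ with $u' \in W^L$ (the set of shortest representatives of $W/W_L$) and $y \in W_L$, and this factorization is length-additive, i.e., $\ell(w) = \ell(u') + \ell(y)$.

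First, I would apply this decomposition to $w_0$ itself, writing $w_0 = u'y$ with $\ell(w_0) = \ell(u') + \ell(y)$. Since $w_{0,L}$ is the longest element of $W_L$, maximality of $\ell(w_0)$ forces $y = w_{0,L}$, which identifies $u' = w_0 w_{0,L}^{-1} = u$. Hence $u \in W^L$, and a characterizing property of $W^L$ gives the key length identity
\[
  \ell(ux) = \ell(u) + \ell(x) \qquad \text{for every } x \in W_L.
\]

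Second, I would argue by contradiction: suppose $u^{-1} v = x \in W_L$ for some $v$ with $u > v$ in the Bruhat order. Then $v = ux$, so by the identity above $\ell(v) = \ell(u) + \ell(x) \geq \ell(u)$. But $u > v$ strictly in the Bruhat order forces $\ell(v) < \ell(u)$, a contradiction.

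This argument is essentially bookkeeping with the parabolic decomposition, so there is no real obstacle; the only subtle point is making sure one uses the correct direction of cosets (left cosets $uW_L$ with $u \in W^L$) to match the equation $u^{-1} v \in W_L$, i.e., $v \in uW_L$. No centrality or braid-group input is needed here — everything reduces to the elementary length formula for $u \in W^L$.
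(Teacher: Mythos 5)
Your proof is correct, but it runs along a slightly different track than the paper's. You identify $u = w_0 w_{0,L}^{-1}$ as the minimal-length representative of the coset $w_0 W_L$, invoke the general length-additivity $\ell(ux) = \ell(u) + \ell(x)$ for $x \in W_L$, and derive a contradiction with $\ell(v) < \ell(u)$; the cautionary remark about matching left cosets ($u^{-1}v \in W_L \Leftrightarrow v \in uW_L$) is exactly right, and the step forcing $y = w_{0,L}$ in the decomposition of $w_0$ is sound since otherwise $u'w_{0,L}$ would be longer than $w_0$. The paper instead argues directly: since $u^{-1} = w_{0,L} w_0$, membership $u^{-1}v \in W_L$ is equivalent to $w_0 v \in W_L$, and then $\ell(w_0 v) = \ell(w_0) - \ell(v) > \ell(w_0) - \ell(u) = \ell(w_{0,L})$ shows $w_0 v$ is too long to lie in $W_L$, as $w_{0,L}$ is its longest element. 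So the paper leans on $w_0$-translation being length-reversing plus the maximality of $w_{0,L}$ in $W_L$, using only the factorization $\ell(w_0) = \ell(u) + \ell(w_{0,L})$ already established in the construction of $\alpha$, whereas you import the standard parabolic decomposition $W \simeq W^L \times W_L$; your route is marginally more machinery but gives the cleaner conceptual statement that no $v < u$ can lie in the coset $uW_L$, while the paper's is a self-contained two-line length computation. Both are complete proofs.
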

\begin{proof}
  This is clear since $w_0$ is its own inverse.
\end{proof}

\begin{lem} \label{lem:shifted_cone_computation}
  $\Delta_{w_0} \Cone(\Delta_{w_0} \to \nabla_{w_0}) \in \lrangle{\Delta_{w_0 v} \mid w_0 \neq v}$.
\end{lem}
\begin{proof}
  By construction,
  \[
    \Cone(\Delta_{w_0} \to \nabla_{w_0}) \in \lrangle{\Delta_v \mid w_0 > v} = \lrangle{\nabla_v \mid w_0 > v} = \lrangle{\nabla_v \mid w_0 \neq v},
  \]
  where the first equality is due to the fact that $W \setminus \{w_0\}$ is closed with respect to the Bruhat order on $W$. Thus,
  \[
    \Delta_{w_0} \Cone(\Delta_{w_0} \to \nabla_{w_0}) \in \lrangle{\Delta_{w_0} \nabla_v \mid w_0 \neq v} = \lrangle{\Delta_{w_0v} \mid w_0 \neq v},
  \]
  where the last equality is due to \cref{eq:Delta_w_0_nabla_u}.
\end{proof}



The proof of \cref{prop:FT_to_FT_L} is now complete.\qed

\section*{Acknowledgements}
We thank the anonymous referees for their careful reading of the manuscript, providing extensive feedback which greatly helps improve the clarity of the exposition, and catching an important error in an earlier version of the paper. Q. Ho and P. Li are partially supported by the National Key R\&D Program of China (Grant 2024YFA1014700) and the Hong Kong RGC GRF grants 16304923 and 16301324.

\printbibliography
\end{document}